\definecolor{DarkBlue}{rgb}{0,0.2,0.6}
\definecolor{PinkPurple}{rgb}{0.8,0.3,0.3}
\newtheorem{thm}{Theorem}[section]
\newtheorem{lemma}[thm]{Lemma}
\newtheorem{crl}[thm]{Corollary}
\theoremstyle{definition}
\newtheorem{dfn}[thm]{Definition}
\newtheorem{rem}[thm]{Remark}
\newcommand{\reals}{\mathbb{R}}
\newcommand{\cplx}{\mathbb{C}}
\newcommand{\rx}{\sgr{\mathbb{R}}{\ux}}
\newcommand{\sos}{\sum\mathbb{R}[\underline{X}]^2}
\newcommand{\ux}{\underline{X}}
\newcommand{\supp}{\text{supp}}
\newcommand{\K}[1]{\mathcal{K}_{#1}}
\newcommand{\pos}[1]{\mbox{Pos}(#1)}
\newcommand{\V}[1]{\mathcal{X}(#1)}
\newcommand{\M}[1]{\mathfrak{sp}(#1)}
\newcommand{\sgr}[2]{#1[#2]}
\newcommand{\ringsop}[2]{\sum #1^{#2}}
\newcommand{\norm}[2]{\|\ifthenelse{\isempty{#2}}{\cdot}{#2}\|_{#1}}
\newcommand{\cl}[2]{\overline{#2}^{\ifthenelse{\isempty{#1}}{}{#1}}}
\newcommand{\Cnt}[2]{\mathrm{C}_{\ifthenelse{\isempty{#1}}{}{#1}}(#2)}
\newcommand{\Psd}[2]{\mbox{Psd}_{\ifthenelse{\isempty{#1}}{}{#1}}(#2)}
\newcommand{\map}[3]{#1:#2\longrightarrow #3}
\begin{document}

\title[Application of Jacobi's Theorem to lmc $\reals$-Algebras]{Application of Jacobi's Representation Theorem to locally multiplicatively convex topological $\reals$-Algebras}
\author[M. Ghasemi, S. Kuhlmann, M. Marshall]{Mehdi Ghasemi$^1$, Salma Kuhlmann$^2$, Murray Marshall$^1$}
\address{$^1$Department of Mathematics and Statistics,\newline\indent
University of Saskatchewan,\newline\indent
Saskatoon, SK. S7N 5E6, Canada}
\email{mehdi.ghasemi@usask.ca, marshall@math.usask.ca}
\address{$^2$Fachbereich Mathematik und Statistik,\newline\indent
Universit\"{a}t Konstanz\newline\indent
78457 Konstanz, Germany}
\email{salma.kuhlmann@uni-konstanz.de}
\keywords{positivity, sums of squares, sums of 2d-powers,
locally multiplicatively convex topologies, norms and seminorms,
continuous linear functionals, moment problem}

\subjclass[2010]{Primary 13J30, 14P99,
44A60; Secondary 43A35, 46B99, 44A60.}

\date{\today}

\begin{abstract}
Let $A$ be a commutative unital $\reals$-algebra and let $\rho$ be a seminorm on $A$ which satisfies $\rho(ab)\leq\rho(a)\rho(b)$. We apply T. Jacobi's
representation theorem \cite{J} to determine the closure of a $\ringsop{A}{2d}$-module $S$ of $A$ in the topology induced by $\rho$, for any integer
$d\ge1$. We show that this closure is exactly the set of all elements $a\in A$ such that $\alpha(a)\ge0$ for every $\rho$-continuous $\reals$-algebra homomorphism $\map{\alpha}{A}{\reals}$ with $\alpha(S)\subseteq[0,\infty)$, and that this result continues to hold when $\rho$ is replaced by any locally multiplicatively convex topology $\tau$ on $A$. We obtain a representation of any linear functional $L : A \rightarrow \reals$ which is continuous with respect to any such  $\rho$ or $\tau$ and non-negative on $S$ as integration with respect to a unique Radon measure on the space of all real valued $\reals$-algebra homomorphisms on $A$, and we characterize the support of the measure obtained in this way.
\end{abstract}

\maketitle


\section{Introduction}
It was known to Hilbert \cite{Hil} that a nonnegative real multivariable polynomial
$f=\sum_{\alpha}f_{\alpha}\ux^{\alpha}\in\rx:=\reals[X_1,\dots,X_n]$
is not necessarily a sum of squares of polynomials. However, every such polynomial can be approximated by elements of the cone $\sos:=$ sums of
squares of polynomials, with respect to the topology induced by the norm $\norm{1}{}$
(given by $\norm{1}{\sum_{\alpha}f_{\alpha}\ux^{\alpha}}:=\sum_{\alpha}|f_{\alpha}|$).
In fact, every polynomial $f\in\rx$, nonnegative on $[-1,1]^n$ is in the $\norm{1}{}$-closure of $\sos$ \cite[Theorem 9.1]{BCR}.
Moreover, it is known that for every $f\in\pos{[-1,1]^n}:=$ the cone of nonnegative polynomials on $[-1,1]^n$, and $\epsilon>0$, there exists $N>0$
depending on $n$, $\epsilon$, $\deg f$ and the size of coefficients of $f$ such that for every integer $r\ge N$, the polynomial
$f_{\epsilon,r}:=f+\epsilon(1+\sum_{i=1}^nX_i^{2r})\in\sos$.
This gives an effective way of approximating $f$ by sums of squares in $\norm{1}{}$ \cite[Theorem 3.9]{JLN}.
The closure of $\sos$ with respect to the family of weighted $\norm{p}{}$-norms has been studied in \cite{GKS}.
Note that an easy application of Stone-Weierstrass Theorem shows that the same result holds for the coarser norm
$\norm{\infty}{f}:=\sup_{x\in[-1,1]^n}|f(x)|$; i.e., $\cl{\norm{\infty}{}}{\sos}=\pos{[-1,1]^n}$, but in practice, finding $\norm{\infty}{f}$ is a
computationally difficult optimization problem, whereas $\norm{1}{f}$ is easy to compute. Therefore to gain more computational flexibility it is
interesting to study such closures with respect to various norms on $\rx$.

The general set-up we consider is the following. Let $C$ be a cone in $\rx$, $\tau$ a locally convex topology on $\rx$
and $K\subseteq\reals^n$ be a closed set. Consider the condition:
\begin{equation}\label{ConeClEq}
\cl{\tau}{C}\supseteq \pos{K},
\end{equation}
(where as above, $\pos{K}$ denotes the set of polynomials nonnegative on $K$).
An application of Hahn-Banach Separation Theorem together with Haviland's Theorem
(see Theorem \ref{Haviland}) shows that \textit{\eqref{ConeClEq} holds if and only if for every $\tau$-continuous linear functional $L$ with
$L(C)\subseteq[0,\infty)$, there exists a Borel measure $\mu$ on $K$ such that}
\begin{equation}\label{LinRep}
	\forall f\in\rx\quad L(f)=\int_K f~d\mu.
\end{equation}

In the present paper, we study closure results of type \eqref{ConeClEq} and their corresponding representation results of type \eqref{LinRep}
for \textit{any} locally multiplicatively convex (unital, commutative) topological $\reals$-algebra.

In Section \ref{prelim} we introduce some terminology and notation and recall Jacobi's Theorem and a generalized version of Haviland's Theorem,
results which play a crucial role throughout the paper.

In Section \ref{snAlgs} we consider the case of a submultiplicative seminorm $\rho$ on an $\reals$-algebra $A$.
In Theorem \ref{MainThm} we prove that for any integer $d\ge1$ and any $\ringsop{A}{2d}$-module $S$ of $A$, $\cl{\rho}{S}$ consists of all elements
of $A$ with nonnegative image under every $\rho$-continuous $\reals$-algebra homomorphism $\map{\alpha}{A}{\reals}$ such that
$\alpha(S)\subseteq[0,\infty)$. This generalizes \cite[Theorem 5.3]{G-K} on the closure of $\ringsop{A}{2d}$ with respect to a submultiplicative
norm.
The application of Theorem \ref{MainThm} to the representation of linear functionals by measures is explained in
Corollary \ref{GenMoment}.

In Section \ref{seminormed $*$-algebras} we explain how Theorem \ref{MainThm} and Corollary \ref{GenMoment} apply in the case of a (unital,
commutative) $*$-algebra equipped with a submultiplicative $*$-seminorm. Corollary \ref{*-algebra version} generalizes results on $*$-semigroup
algebras in \cite[Theorem 4.2.5]{BCRBK} and \cite[Theorem 4.3 and Corollary 4.4]{GMW}.

In Section \ref{lmcalgs}, specifically in Theorem \ref{MainThm2}, we explain how 
Theorem \ref{MainThm} extends to the class
of locally multiplicatively convex topologies.
Such topologies are induced by families of submultiplicative seminorms.
Theorem \ref{MainThm2} can viewed as a strengthening (in the commutative case) of the result in \cite[Lemma 6.1 and
Proposition 6.2]{Schm2} about enveloping algebras of Lie algebras.

\section{Preliminaries}\label{prelim}
Throughout $A$ denotes a unitary commutative $\reals$-algebra.
The set of all unitary $\reals$-algebra homomorphisms from $A$ to $\reals$ will be denoted by $\V{A}$. Note that $\V{A}$ as a subset of $\reals^A$ carries a natural topology, where
$\reals^A$ is endowed with the product topology. This topology coincides with the weakest topology on $\V{A}$ which makes all the evaluation maps
$\map{\hat{a}}{\V{A}}{\reals}$, defined by $\hat{a}(\alpha)=\alpha(a)$ continuous \cite[section 5.7]{MPS}.

For an integer $d\ge1$, $\ringsop{A}{2d}$ denotes the set of all finite sums of $2d$ powers of elements of $A$.
A \textit{$\ringsop{A}{2d}$-module} of $A$ is a subset $S$ of $A$ such that $1\in S$, $S+S\subseteq S$ and $a^{2d}\cdot S\subseteq S$ for each $a\in A$.
We say $S$ is \textit{archimedean} if for each $a\in A$ there exists an integer $n\ge1$ such that $n+ a\in S$.
For any subset $S$ of $A$, the non-negativity set of $S$, denoted by $\K{S}$, is defined by
\[
	\K{S}:= \{ \alpha \in \V{A} : \hat{a}(\alpha)\ge 0 \text{ for all } a\in S\}.
\]
Also, for $K\subseteq \V{A}$, we define $\pos{K}$ by
\[
	\pos{K}:=\{a\in A : \hat{a}(\alpha) \ge 0 \text{ for all } \alpha \in K\}.
\]

\begin{thm}[Jacobi]\label{Jacobi} Suppose $S$ is an archimedean $\ringsop{A}{2d}$-module of $A$ for some integer $d\ge1$. Then for each $a\in A$,
\[
	\hat{a}>0 ~ on ~ \K{S}\Rightarrow a\in S.
\]
\end{thm}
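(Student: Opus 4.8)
The plan is to adapt the classical proof of the Kadison--Dubois representation theorem for archimedean quadratic modules to the setting of $2d$-th powers; replacing $\ringsop{A}{2}$ by $\ringsop{A}{2d}$ is precisely Jacobi's contribution, and is where the real work lies. Write $M:=S$. First I would dispose of the case $-1\in M$. Since $\reals\subseteq A$ and $r=(r^{1/2d})^{2d}$ for $r\ge 0$, the cone $\ringsop{A}{2d}$ contains $\rationals_{\ge 0}$ and $M$ is closed under multiplication by nonnegative reals; also $a^{2d}\in M$ for all $a$, so $-1\in M$ forces $\pm a^{2d}\in M$ for every $a\in A$, and hence (closure under addition and $\rationals$-scaling) $M$ contains every $\rationals$-linear combination of $2d$-th powers. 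By a Vandermonde argument the elements $(a+i)^{2d}$, $i=0,\dots,2d$, span $1,a,\dots,a^{2d}$ over $\rationals$, so every element of $A$ is such a combination; thus $M=A\ni a$. So assume from now on that $-1\notin M$. The theorem is equivalent to the contrapositive statement that \emph{for every $b\in A$ with $b\notin M$ there is $\alpha\in\K{M}$ with $\alpha(b)\le 0$}: applied to $b=a$ this yields the claim, since $\hat a$ being \emph{strictly} positive on $\K{M}$ is incompatible with the existence of such an $\alpha$.

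So fix $b\notin M$ and, by Zorn's Lemma, choose a $\ringsop{A}{2d}$-module $P\supseteq M$ maximal with respect to $b\notin P$; then $P$ is archimedean and, by the first paragraph, $-1\notin P$. The core of the proof is the structure of this maximal $P$: I would show that $\mathfrak p:=P\cap(-P)$ is a prime ideal of $A$ and that $P\cup(-P)=A$. Granting this, $A/\mathfrak p$, ordered by the image of $P$, is a totally ordered integral domain which is archimedean (for each $a$ there is an integer $n$ with $n\pm a\in M\subseteq P$). By the standard H\"older-type fact, an archimedean totally ordered commutative ring admits an order-preserving ring homomorphism to $\reals$, namely $x\mapsto\inf\{q\in\rationals:q\ge x\}$; since $\reals$ is a field inside $A$ and $1\notin\mathfrak p$ (equivalently $-1\notin P$), we have $\reals\cap\mathfrak p=0$ and this homomorphism is the identity on $\reals$. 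Composing with $A\to A/\mathfrak p$ produces $\alpha\in\V{A}$ with $\alpha(P)\subseteq[0,\infty)$, so that $\alpha\in\K{M}$, while $b\notin P$ together with $P\cup(-P)=A$ gives $b\in-P$, whence $\alpha(b)\le 0$, as required.

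I expect the main obstacle to be exactly these two structural assertions about $P$: that $\mathfrak p=P\cap(-P)$ is closed under multiplication by \emph{all} of $A$ (closure under $2d$-th powers is immediate, but a $\ringsop{A}{2d}$-module, unlike a preordering, is not closed under products) and that $P\cup(-P)=A$. For a maximal proper preordering both facts are classical and routine; for $\ringsop{A}{2d}$-modules they are not, and the archimedean hypothesis is indispensable. The maneuver I would attempt: if $c\in A$ with $c\notin P$ and $-c\notin P$, then maximality forces $b$ into the modules generated by $P$ together with $c$ and with $-c$, so $b=p_1+\sigma_1 c=p_2-\sigma_2 c$ for some $p_i\in P$, $\sigma_i\in\ringsop{A}{2d}$; eliminating $c$ gives $\sigma b\in P$ with $\sigma:=\sigma_1+\sigma_2\in\ringsop{A}{2d}$, and $\sigma\neq 0$ (else $b=p_1\in P$). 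One then has to ``cancel'' $\sigma$ to conclude $b\in P$, contradicting $b\notin P$; an analogous argument handles the ideal property. Since no inverse of $\sigma$ is available in $A$, this cancellation must proceed through the archimedean bound: pick an integer $N\ge 1$ with $N-\sigma\in M$, set $\eta:=\tfrac1N(N-\sigma)\in M$, so that $1-\eta=\tfrac1N\sigma\in\ringsop{A}{2d}$ and $\tfrac1N\sigma\,b=b-\eta b\in P$, and then iterate this identity (in a form adapted to $2d$-th powers, using the archimedean bound on $b$ to control the tail $\eta^k b$). Getting this cancellation exactly right is, I believe, the technical heart of the argument; the reductions above, the passage to $A/\mathfrak p$, and the invocation of H\"older are routine, and it is the closure of $M$ under multiplication by positive reals --- which holds because $A$ is an $\reals$-algebra --- that legitimizes the normalizations used along the way.
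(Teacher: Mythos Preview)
The paper does not actually prove this theorem: its entire proof reads ``See \cite[Theorem 4]{J}.'' So there is no in-paper argument to compare against; the result is imported wholesale from Jacobi's original article.

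Your outline is, in fact, a faithful sketch of Jacobi's own proof (and of the standard Kadison--Dubois strategy it extends). The reduction to $-1\notin M$ via the Vandermonde identity, the Zorn step producing a maximal $\ringsop{A}{2d}$-module $P$ avoiding $b$, the structural claims that $P\cup(-P)=A$ and that $P\cap(-P)$ is a prime ideal, and the passage to an $\reals$-valued homomorphism via the archimedean order on $A/\mathfrak p$ are all correct and in the right order. You have also correctly located the one genuinely nontrivial point: the ``cancellation'' lemma showing that $\sigma b\in P$ with $\sigma\in\ringsop{A}{2d}$ forces $b\in P$ under the archimedean hypothesis. Your proposed mechanism (normalize so that $1-\eta\in\ringsop{A}{2d}$ with $\eta\in M$ and iterate, controlling the tail by an archimedean bound on $b$) is exactly the device Jacobi uses; carrying it out rigorously requires the identity $1-t^{2d}=(1-t)\sum_{i=0}^{2d-1}t^i$ together with a further Vandermonde-type expression of $\sum t^i$ in terms of shifted $2d$-th powers, so that one can absorb the non-$2d$-th-power factors back into $P$. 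That bookkeeping is tedious but routine once set up. One small cosmetic point: since $A$ is already an $\reals$-algebra, it is cleaner to define $\alpha(a)=\inf\{r\in\reals:r-a\in P\}$ directly over $\reals$ rather than over $\rationals$; this avoids a separate density argument when checking multiplicativity.
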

\begin{proof}
See \cite[Theorem 4]{J}.
\end{proof}

Recall that a \textit{Radon measure} on a Hausdorff topological space $X$ is a measure on the $\sigma$-algebra of Borel sets of $X$ that is locally finite and inner regular.  \textit{Locally finite} means that every point has a neighbourhood of finite measure.  \textit{Inner regular} means each Borel set can be approximated from within using a compact set.
We will use the following version of Haviland's Theorem to get representations of linear functionals on $A$.
\begin{thm}\label{Haviland}
Suppose $A$ is an $\reals$-algebra, $X$ is a  Hausdorff space, and $\map{\hat{~}}{A}{\Cnt{}{X}}$ is an $\reals$-algebra homomorphism such that for some $p\in A$,
$\hat{p}\ge0$ on $X$, the set $X_i=\hat{p}^{-1}([0,i])$ is compact for each $i=1,2,\cdots$. Then for every linear functional $\map{L}{A}{\reals}$ satisfying
\[
    L(\{a\in A:\hat{a}\ge0\textrm{ on }X\})\subseteq[0,\infty),
\]
there exists a Radon measure $\mu$ on $X$ such that $\forall a\in A \quad L(a)=\int_X\hat{a}~d\mu$.
\end{thm}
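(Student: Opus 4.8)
The plan is to prove Theorem \ref{Haviland} by reducing it to the classical Riesz--Haviland-type result via a compactification argument, using the exhaustion of $X$ by the compact sets $X_i = \hat p^{-1}([0,i])$. First I would observe that the hypothesis forces $X = \bigcup_{i\ge 1} X_i$ to be $\sigma$-compact and locally compact (each $X_i$ is compact, hence each point of $X_i$ has a compact neighbourhood in $X_{i+1}$, say), and that $\hat p$ is a proper nonnegative continuous function on $X$. The idea is to form the one-point compactification $X^* = X \cup \{\infty\}$ (or work directly with each $X_i$) and to transfer the positivity condition on $L$ to a positivity condition on a suitable space of continuous functions.

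The key steps, in order, are as follows. (1) Set $P := \{a \in A : \hat a \ge 0 \text{ on } X\}$; by hypothesis $L(P) \subseteq [0,\infty)$, and $\widehat{\phantom{a}} : A \to \Cnt{}{X}$ is an algebra homomorphism, so $\hat A := \{\hat a : a \in A\}$ is a unital subalgebra of $\Cnt{}{X}$ separating points only insofar as $A$ does --- but we do not need separation. (2) For each fixed $i$, restrict attention to the compact space $X_i$. The functional $L$ need not be defined on all of $\Cnt{}{X_i}$, so instead one uses that $\hat A|_{X_i}$ is a subalgebra of $\Cnt{}{X_i}$ containing the constants, and that a linear functional on this subalgebra which is nonnegative on squares-nonnegative elements extends, by the Riesz representation theorem applied after a Hahn--Banach / Stone--Weierstrass step, or more directly: apply the classical Haviland theorem in the form that says a linear functional nonnegative on $\pos{K}$ for $K$ compact is given by a Radon (hence finite, since $K$ is compact) measure on $K$. (3) Thus for each $i$ one obtains a finite Radon measure $\mu_i$ on $X_i$ with $L(a) = \int_{X_i} \hat a\, d\mu_i$ for all $a$ such that $\hat a$ is supported, in the relevant sense, in $X_i$ --- more carefully, one uses the "localizing" element $\hat p$ to pass between $X$ and $X_i$: the cone $P$ restricted to each $X_i$ still sits inside $\pos{X_i}$. (4) Finally, show the $\mu_i$ are consistent ($\mu_{i+1}$ restricted to $X_i$ equals $\mu_i$, by uniqueness of the representing measure on the smaller compact set, which follows because $\hat A$ is dense in $\Cnt{}{X_i}$ in a suitable sense or because the representing measure is forced) and glue them into a single Radon measure $\mu$ on $X = \bigcup_i X_i$ with $\mu|_{X_i} = \mu_i$; then for any $a \in A$, since $\hat a$ is continuous and the tail behaviour is controlled by $\hat p$, conclude $L(a) = \lim_i \int_{X_i}\hat a\, d\mu_i = \int_X \hat a\, d\mu$.

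The main obstacle I anticipate is twofold. First, the homomorphism $\widehat{\phantom{a}}$ need not be injective and $\hat A$ need not be dense in $\Cnt{}{X}$, so one cannot simply invoke Stone--Weierstrass to extend $L$ to all of $\Cnt{}{X}$; the argument must stay inside $\hat A$ and exploit the algebra structure, in particular that for $a \in A$ the function $i \mapsto \hat p(x)$ dominates $\hat a$ off a large compact set only after one inserts a bounding element of $A$ --- so one really needs to know that elements of the form $N \pm a$ (for suitable $N$ built from powers of $p$) lie in $P$ on each $X_i$, which requires care since $A$ is not assumed archimedean globally. Second, proving the $\sigma$-additivity and inner regularity of the glued measure $\mu$ on the non-compact space $X$, together with the passage to the limit $\int_{X_i}\hat a\,d\mu_i \to \int_X \hat a\, d\mu$, needs a dominated-convergence argument whose dominating function must itself be $\mu$-integrable; the natural candidate is a polynomial in $\hat p$, and showing such a function is $\mu$-integrable is where the properness of $\hat p$ and the finiteness of each $\mu_i$ must be combined quantitatively. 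I would handle this by first proving a uniform bound $\mu_i(X_i) \le L(1)$ (taking $a = 1 \in P$), so that the total masses are bounded, and then $\mu(X) \le L(1) < \infty$, which makes $\mu$ automatically finite and reduces the limit to ordinary dominated convergence with dominating constant $\sup_{X}|\hat a|$ once we know $\hat a$ is bounded --- and if $\hat a$ is unbounded, we instead use that $\hat a$ is dominated by $c(1 + \hat p)$ on each $X_i$ for a constant $c$ independent of $i$, coming from writing $a$ in terms of the generators, and that $\int_X \hat p\, d\mu \le \sum$ telescoping estimate $< \infty$.
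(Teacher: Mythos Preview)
The paper does not actually prove Theorem \ref{Haviland}; it records it as a known result and refers to \cite[Theorem 3.1]{Marshall} and \cite[Theorem 3.2.2]{MPS} for the proof. So there is no in-paper argument to compare your proposal against.

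That said, your restriction-and-glue strategy has a genuine gap at steps (2)--(3). To manufacture a representing measure $\mu_i$ on the compact set $X_i$ via the compact Haviland/Riesz theorem you would need $L$ to be nonnegative on $\pos{X_i} = \{ a\in A : \hat a \ge 0 \text{ on } X_i\}$. The hypothesis, however, only gives $L$ nonnegative on $\pos{X}$, and since $X_i\subseteq X$ one has $\pos{X}\subseteq\pos{X_i}$, not the reverse. An element $a$ with $\hat a\ge 0$ on $X_i$ but $\hat a<0$ somewhere on $X\setminus X_i$ may well have $L(a)<0$, so the compact case cannot be invoked on $X_i$ directly. A related problem: the assignment $\hat a|_{X_i}\mapsto L(a)$ need not even be well defined, because two elements of $A$ can agree on $X_i$ yet differ on $X$ without forcing their $L$-values to coincide. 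Your remark about ``localizing with $\hat p$'' and about needing $N\pm a\in P$ on each $X_i$ points toward the difficulty but does not resolve it, since $P$ is positivity on all of $X$ and there is no cutoff element inside $A$.

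The proofs in the cited references proceed differently: one adjoins to the image of $A$ in $\Cnt{}{X}$ the single function $q=\frac{1}{1+\hat p}$, extends $L$ to the enlarged algebra $B$ by a positivity/sandwich argument (for each $b\in B$ there exist $a_1,a_2\in A$ with $\hat a_1\le b\le \hat a_2$ on $X$), and observes that every element of $B$ extends continuously to the one-point compactification $X^*=X\cup\{\infty\}$ because $q(\infty)=0$. One then applies the compact case once on $X^*$; the resulting measure carries no mass at $\infty$ and restricts to the desired Radon measure on $X$. If you wish to repair your outline, the missing ingredient is precisely this adjunction of $q$ (or an equivalent device); without it the positivity hypothesis on $L$ is too coarse to localize to any individual $X_i$.
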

Here, $\Cnt{}{X}$ denotes the ring of all continuous real valued functions on $X$.
A proof of Theorem \ref{Haviland} can be found in  \cite[Theorem 3.1]{Marshall} or \cite[Theorem 3.2.2]{MPS} (also see \cite{Hav1, Hav2} for the original version). Note that the hypothesis of Theorem \ref{Haviland} implies in particular that $X$ is locally compact (so $\mu$ is actually a Borel measure).

\section{Seminormed $\reals$-Algebras}\label{snAlgs}

\begin{dfn}
A \textit{seminorm} $\rho$ on $A$ is a map $\map{\rho}{A}{[0,\infty)}$ such that
\newline
\indent	(1) for $x\in A$ and $r\in\reals$, $\rho(rx)=|r|\rho(x)$, and\newline
\indent (2) for all $x,y\in A$, $\rho(x+y)\leq\rho(x)+\rho(y)$.\newline
Moreover, $\rho$ is called a \textit{submultiplicative} seminorm if in addition:\newline
\indent (3) for all $x,y\in A$, $\rho(xy)\leq\rho(x)\rho(y)$.
\end{dfn}
The algebra $A$ together with a submultiplicative seminorm $\rho$ on $A$ is called a \textit{seminormed algebra} and is denoted by the symbolism $(A,\rho)$.
We denote the set of all $\rho$-continuous $\reals$-algebra homomorphisms from $A$ to $\reals$ by $\M{\rho}$,
which we refer to as the
\textit{Gelfand spectrum} of $(A,\rho)$. The topology on $\M{\rho}$ 
is the topology induced as a subspace of $\V{A}$.
\begin{lemma}\label{bddmrph}
For any submultiplicative seminorm $\rho$ on $A$, $$\M{\rho} = \{ \alpha \in \V{A} : |\alpha(x)|\leq\rho(x) \text{ for all } x\in A\}.$$
\end{lemma}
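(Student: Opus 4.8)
The plan is to prove the two inclusions separately. For the inclusion "$\supseteq$", suppose $\alpha \in \V{A}$ satisfies $|\alpha(x)| \leq \rho(x)$ for all $x \in A$. This says precisely that $\alpha$ is bounded as a linear map with respect to $\rho$, hence continuous; so $\alpha \in \M{\rho}$. This direction is immediate and requires no real work — the only thing to note is that an $\reals$-linear functional on a seminormed space is continuous iff it is bounded on the unit "ball", which is standard.

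The substantive direction is "$\subseteq$": given $\alpha \in \M{\rho}$, i.e. a $\rho$-continuous $\reals$-algebra homomorphism, I must show $|\alpha(x)| \leq \rho(x)$ for every $x \in A$. Continuity gives only that $|\alpha(x)| \leq C\rho(x)$ for some constant $C > 0$; the job is to improve this to $C = 1$. Here is where submultiplicativity of $\rho$ and multiplicativity of $\alpha$ combine. For any $x$ and any integer $n \geq 1$, since $\alpha$ is a homomorphism we have $|\alpha(x)|^n = |\alpha(x^n)| \leq C\rho(x^n) \leq C\rho(x)^n$ by submultiplicativity. Taking $n$-th roots, $|\alpha(x)| \leq C^{1/n}\rho(x)$, and letting $n \to \infty$ yields $|\alpha(x)| \leq \rho(x)$, as desired. (If $\alpha(x) = 0$ there is nothing to prove; otherwise the estimate is genuine.)

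The main — indeed essentially the only — obstacle is recognizing the right trick: passing to powers to wash out the constant $C$, which is the standard spectral-radius-type argument. One small point worth stating carefully is that the constant $C$ is uniform in $x$, which follows from continuity of the linear functional $\alpha$ at $0$ with respect to the seminorm topology; after that the power trick applies to each fixed $x$ separately. No appeal to Jacobi's Theorem or Haviland's Theorem is needed here; this lemma is purely about the interplay of submultiplicativity and multiplicativity, and it will be used later to identify $\M{\rho}$ with a subset of $\K{S}$-type sets in the proof of Theorem \ref{MainThm}.
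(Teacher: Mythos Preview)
Your proof is correct and uses essentially the same idea as the paper: exploit multiplicativity of $\alpha$ and submultiplicativity of $\rho$ on powers $x^n$ to upgrade a bound to the sharp one. The only cosmetic difference is that the paper argues by contrapositive (if $|\alpha(x)|>\rho(x)$ for some $x$, rescale to $y$ with $\rho(y)<1<|\alpha(y)|$ and note $\rho(y^n)\to 0$ while $|\alpha(y^n)|\to\infty$, so $\alpha$ is discontinuous), whereas you first extract the continuity constant $C$ and then shrink it to $1$ via $n$-th roots.
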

\begin{proof} Suppose $\alpha \in \V{A}$ and there exists $x\in A$ such that $|\alpha(x)|>\rho(x)$. Set $y= \frac{x}{\delta}$ where $\delta \in \reals$ is such that $|\alpha(x)|>\delta>\rho(x)$. Then $\rho(y)<1$ and $|\alpha(y)|>1$ so, as $n\rightarrow \infty$, $\rho(y^n) \rightarrow 0$ and $|\alpha(y^n)|\rightarrow \infty$. This proves ($\subseteq$). The other inclusion is clear.
\end{proof}

\begin{crl}\label{compact} For any submultiplicative seminorm $\rho$ on $A$, $\M{\rho}$
is compact.
\end{crl}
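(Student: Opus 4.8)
The plan is to realize $\M{\rho}$ as a closed subset of a compact space, namely a product of closed intervals, and then invoke Tychonoff's theorem. By Lemma~\ref{bddmrph}, every $\alpha\in\M{\rho}$ satisfies $|\alpha(x)|\le\rho(x)$ for all $x\in A$, so $\alpha$ lies in the product space $P:=\prod_{x\in A}[-\rho(x),\rho(x)]$. Since each factor is a compact interval, $P$ is compact by Tychonoff's theorem, and $P$ carries exactly the product topology, which restricts to the topology on $\M{\rho}$ as a subspace of $\reals^A$ (and hence of $\V{A}$). So it suffices to show that $\M{\rho}$ is closed in $P$.

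To see that $\M{\rho}$ is closed in $P$, I would note that being an $\reals$-algebra homomorphism is a ``coordinatewise closed'' condition. Concretely, $\M{\rho}$ is the intersection, over all $x,y\in A$ and all $r,s\in\reals$, of the sets
\[
\{\alpha\in P : \alpha(rx+sy)=r\,\alpha(x)+s\,\alpha(y)\},\quad \{\alpha\in P:\alpha(xy)=\alpha(x)\alpha(y)\},\quad \{\alpha\in P:\alpha(1)=1\}.
\]
Each of these is the preimage of $\{0\}$ (or $\{1\}$) under a map $P\to\reals$ that is continuous, because it is built from finitely many coordinate projections $\alpha\mapsto\alpha(z)$ (which are continuous in the product topology) together with addition, scalar multiplication, and multiplication in $\reals$. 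Hence each such set is closed in $P$, and $\M{\rho}$, being an arbitrary intersection of closed sets, is closed. By Lemma~\ref{bddmrph} the $\rho$-continuity condition is automatically built into the bound defining $P$, so no further conditions are needed; a closed subset of the compact space $P$ is compact, which is the claim.

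I do not expect a genuine obstacle here: the argument is the standard Gelfand-type compactness argument, and all the pieces (Tychonoff, the description in Lemma~\ref{bddmrph}, continuity of coordinate projections in the product topology) are routine. The only point that merits a word of care is the identification of topologies: one must check that the subspace topology on $\M{\rho}\subseteq\V{A}\subseteq\reals^A$ agrees with the subspace topology it inherits as a subset of $P\subseteq\reals^A$, but this is immediate since both are simply the topology of pointwise convergence on $A$ restricted to $\M{\rho}$, as already recorded in Section~\ref{prelim}.
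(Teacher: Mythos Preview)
Your argument is correct and follows essentially the same approach as the paper: identify $\M{\rho}$, via Lemma~\ref{bddmrph}, with a closed subset of the Tychonoff-compact product $\prod_{a\in A}[-\rho(a),\rho(a)]$. The paper's proof is a one-line statement of this identification, while you have spelled out the closedness and topology-matching details explicitly.
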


\begin{proof} The map
$\alpha\mapsto(\hat{a}(\alpha))_{a\in A}$ identifies $\M{\rho}$ with a closed subset of the compact space $\prod_{a\in A}[-\rho(a),\rho(a)]$.
\end{proof}
\begin{rem}\label{CpltnSNAlg}
For a seminormed algebra $(A,\rho)$, the set $I_{\rho}:=\{a\in A : \rho(a)=0\}$ is a closed ideal of $A$ and the map
\[
	\map{\bar{\rho}}{\bar{A}=A/I_{\rho}}{[0,\infty)}
\]
defined by $\bar{\rho}(\bar{a})=\rho(a)$ is a well-defined norm on $\bar{A}$. Thus $(\bar{A},\bar{\rho})$ is a normed $\reals$-algebra and hence $(\bar{A},\bar{\rho})$ admits
a completion $(\tilde{A},\tilde{\rho})$ which is a Banach $\reals$-algebra.
\end{rem}

\begin{lemma}\label{SemiNormSo2d} For any unital Banach $\reals$-algebra $(B,\varphi)$, any $a\in A$ and $r\in \reals$ such that $r>\varphi(a)$, and any integer $k \ge 1$, there exists $p\in B$ such that $p^k=r+a$.
\end{lemma}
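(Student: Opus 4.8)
The plan is to produce $p$ explicitly as a $k$-th root of $r+a$ by means of the generalized binomial series. Since $r>\varphi(a)\ge 0$ we have $r>0$, so set $b:=\tfrac1r a\in B$ (regarding $a$ as an element of $B$); then $\varphi(b)<1$, and it suffices to find $q\in B$ with $q^k=1+b$, because then $p:=r^{1/k}q$ satisfies $p^k=(r^{1/k})^k q^k=r(1+b)=r+a$.

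Define $q:=\sum_{n=0}^{\infty}\binom{1/k}{n}b^n$. This series converges in $B$: since $\varphi$ is submultiplicative, $\varphi\bigl(\binom{1/k}{n}b^n\bigr)\le\bigl|\binom{1/k}{n}\bigr|\varphi(b)^n\le\bigl|\binom{1/k}{n}\bigr|$, and $\sum_{n\ge0}\bigl|\binom{1/k}{n}\bigr|<\infty$ — this is trivial for $k=1$, while for $k\ge 2$ one has $\binom{1/k}{n}=(-1)^{n-1}\bigl|\binom{1/k}{n}\bigr|$ for all $n\ge 1$, so $\sum_{n\ge0}\bigl|\binom{1/k}{n}\bigr| = 2-\sum_{n\ge0}(-1)^n\binom{1/k}{n} = 2-(1-1)^{1/k}=2$, the middle sum being the binomial series for $(1+x)^{1/k}$ evaluated at $x=-1$. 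Completeness of $B$ then gives $q\in B$.

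It remains to check $q^k=1+b$, which is the crux. I would argue this by transferring a formal identity into $B$. Let $W$ be the commutative Banach algebra of formal power series $\sum_n c_n t^n$ with $\bigl\|\sum_n c_n t^n\bigr\|_W:=\sum_n|c_n|<\infty$, under the Cauchy product; then $W$ is a unital subring of $\reals[[t]]$. The element $g:=\sum_n\binom{1/k}{n}t^n$ lies in $W$ by the estimate above, and in $\reals[[t]]$ one has $g^k=1+t$ — equivalently $(1+t)^{1/k}\cdots(1+t)^{1/k}=(1+t)^1$, which is just the Vandermonde identity for generalized binomial coefficients — so $g^k=1+t$ holds in $W$ as well. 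Because $\varphi(b)\le 1$, the map $\Phi:W\to B$ given by $\sum_n c_n t^n\mapsto\sum_n c_n b^n$ is a well-defined continuous unital $\reals$-algebra homomorphism (well-definedness and boundedness are exactly the convergence estimate, and multiplicativity follows from absolute convergence, which licenses rearranging the Cauchy products). Applying $\Phi$ to $g^k=1+t$ yields $q^k=\Phi(g)^k=\Phi(g^k)=\Phi(1+t)=1+b$, and $p:=r^{1/k}q$ is the required element.

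The one subtle point is precisely this passage from the formal to the Banach-algebra setting; the packaging through $W$ is what makes it clean. Alternatively, one could finish via the holomorphic functional calculus, observing that the spectrum of $1+b$ is contained in the disc $\{z\in\cplx:|z-1|\le\varphi(b)\}$, which misses $(-\infty,0]$, so that the principal branch of $z\mapsto z^{1/k}$ is holomorphic on a neighbourhood of it and yields $q=(1+b)^{1/k}$ directly.
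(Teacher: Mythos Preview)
Your proof is correct and follows the same approach as the paper: both construct $p$ via the binomial series $r^{1/k}\sum_{n\ge 0}\binom{1/k}{n}(a/r)^n$. The paper's proof is a two-line sketch that simply asserts the series converges absolutely for $|x|<r$ and that ``this implies'' $p^k=r+a$; you have supplied the details the paper suppresses, in particular the passage from the formal identity $g^k=1+t$ to the Banach-algebra identity $q^k=1+b$ via the auxiliary Wiener-type algebra $W$ and the evaluation homomorphism $\Phi$. Your convergence estimate $\sum_n\bigl|\binom{1/k}{n}\bigr|=2$ is a nice touch but more than is needed here, since $\varphi(b)<1$ strictly and the binomial series has radius of convergence $1$.
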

\begin{proof} This is well-known. The standard power series expansion $$(r+x)^{1/k} = r^{1/k}(1+\frac{x}{r})^{1/k} = r^{1/k}\sum_{i=0}^{\infty} \frac{\frac{1}{k}(\frac{1}{k}-1)\dots (\frac{1}{k}-i)}{i!}(\frac{x}{r})^i$$ converges absolutely for $|x|<r$. This implies that $$p := r^{1/k}\sum_{i=0}^{\infty} \frac{\frac{1}{k}(\frac{1}{k}-1)\dots (\frac{1}{k}-i)}{i!}(\frac{a}{r})^i$$ is a well-defined element of $B$ and $p^k=r+a$.
\end{proof}

\begin{crl} \label{automatic continuity} For any unital Banach $\reals$-algebra $(B,\varphi)$ and any linear functional $L: B \rightarrow \reals$, if $L(b^{2d})\ge 0$ for all $b\in B$ for some $d\ge 1$ then $L$ is $\varphi$-continuous. In particular, each $\alpha \in \V{B}$ is $\varphi$-continuous.
\end{crl}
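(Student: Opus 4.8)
The plan is to prove this directly, by showing that the hypothesis forces $L$ to be bounded with operator norm at most $L(1)$; no appeal to the rest of the machinery of the paper is needed beyond Lemma \ref{SemiNormSo2d}.

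First I would record that $L(1) = L(1^{2d}) \geq 0$, so $L(1)$ is a well-defined nonnegative real number; this is what keeps the two-sided estimate below from being vacuous. Next, fix an arbitrary $a \in B$ and an arbitrary real number $r > \varphi(a)$. Applying Lemma \ref{SemiNormSo2d} with $k = 2d$ (and $B$ in place of the $A$ there), there is $p \in B$ with $p^{2d} = r + a$, so that $r\,L(1) + L(a) = L(r+a) = L(p^{2d}) \geq 0$, i.e. $L(a) \geq -r\,L(1)$. Since $\varphi(-a) = \varphi(a)$, the same argument applied to $-a$ gives $-L(a) = L(-a) \geq -r\,L(1)$, hence $L(a) \leq r\,L(1)$. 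Combining, $|L(a)| \leq r\,L(1)$ for every $r > \varphi(a)$; letting $r$ decrease to $\varphi(a)$ yields $|L(a)| \leq \varphi(a)\,L(1)$. As $(B,\varphi)$ is a normed space, this estimate is precisely $\varphi$-continuity of $L$, with $\|L\| \leq L(1)$.

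For the final assertion, observe that any $\alpha \in \V{B}$ satisfies $\alpha(b^{2d}) = \alpha(b)^{2d} \geq 0$ for all $b \in B$, because $2d$ is even; hence $\alpha$ is $\varphi$-continuous by the first part of the corollary.

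I do not expect any real obstacle here. The only two points requiring a moment's care are that $L(1) \geq 0$ (so the bound $-r\,L(1) \leq L(a) \leq r\,L(1)$ is genuine rather than collapsing) and that the strict inequality $r > \varphi(a)$ in the hypothesis of Lemma \ref{SemiNormSo2d} is no impediment, since one simply takes the limit $r \downarrow \varphi(a)$ at the end. Everything else is a one-line manipulation of linearity and the definition of a bounded functional on a normed space.
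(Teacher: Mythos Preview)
Your proof is correct and follows essentially the same route as the paper: invoke Lemma~\ref{SemiNormSo2d} with $k=2d$ to write $r\pm a$ as a $2d$-th power for any $r>\varphi(a)$, apply $L$, and let $r\downarrow\varphi(a)$ (the paper takes $r=\varphi(a)+\frac{1}{n}$). Your version is slightly more explicit in noting $L(1)\ge 0$ and in spelling out the ``in particular'' clause about $\alpha\in\V{B}$, which the paper leaves implicit.
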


\begin{proof} By Lemma \ref{SemiNormSo2d}, $\frac{1}{n}+\varphi(a)\pm a = \frac{1}{n}+\varphi(\pm a)+(\pm a) \in B^{2d}$ for all $a\in B$ and all $n\ge 1$. Applying $L$ this yields $|L(a)| \le (\frac{1}{n}+\varphi(a))L(1)$ for all $a\in B$ and all $n\ge 1$  so $|L(a)| \le \varphi(a)L(1)$ for all $a\in B$.
\end{proof}

We come now to the main result of the section.

\begin{thm}\label{MainThm}
Let $\rho$ be a submultiplicative seminorm on $A$ and let $S$ be a $\sum A^{2d}$-module of $A$. Then $\cl{\rho}{S}=\pos{\K{S}\cap \M{\rho}}$. In particular, $\cl{\rho}{\ringsop{A}{2d}}=\pos{\M{\rho}}$.
\end{thm}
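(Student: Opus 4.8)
The plan is to prove the two inclusions of $\cl{\rho}{S} = \pos{\K{S}\cap\M{\rho}}$ separately, with the harder direction handled by passing to the Banach algebra completion and applying Jacobi's Theorem there. For the inclusion $\cl{\rho}{S}\subseteq\pos{\K{S}\cap\M{\rho}}$, I would argue as follows. Fix $\alpha\in\K{S}\cap\M{\rho}$. By Lemma \ref{bddmrph}, $\alpha$ is $\rho$-continuous, so $\hat{a}(\alpha) = \alpha(a)$ defines a $\rho$-continuous linear functional on $A$; since $\alpha(S)\subseteq[0,\infty)$ and continuity is preserved under taking $\rho$-limits, we get $\alpha(a)\ge 0$ for every $a\in\cl{\rho}{S}$, which is exactly the claim. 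This direction is routine and uses only that elements of $\M{\rho}$ are automatically $\rho$-continuous.

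The substantive direction is $\pos{\K{S}\cap\M{\rho}}\subseteq\cl{\rho}{S}$. Here I would first reduce to the Banach algebra setting via Remark \ref{CpltnSNAlg}: let $I_\rho$ be the kernel of $\rho$, $\bar A = A/I_\rho$ with the induced norm $\bar\rho$, and $(\tilde A,\tilde\rho)$ the Banach completion, with $\pi\colon A\to\tilde A$ the canonical (continuous, dense-image) map. Let $\bar S$ denote the image of $S$ in $\tilde A$ under $\pi$, and let $\tilde S$ be the $\sum\tilde A^{2d}$-module of $\tilde A$ generated by the $\tilde\rho$-closure of $\bar S$ — or more simply work with $\cl{\tilde\rho}{\bar S}$ directly, noting it is still a $\sum\tilde A^{2d}$-module because the module operations are $\tilde\rho$-continuous on the Banach algebra and $\tilde A^{2d}$ acts continuously. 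The key structural input is that $\tilde S$ (equivalently, a suitable enlargement of $\bar S$) is \emph{archimedean} in $\tilde A$: for any $b\in\tilde A$, pick $r>\tilde\rho(b)$; then by Corollary \ref{automatic continuity}'s proof technique via Lemma \ref{SemiNormSo2d}, $r+b$ and $r-b$ both lie in $\tilde A^{2d}\subseteq\tilde S$, hence $n+b\in\tilde S$ for $n\ge r$. Now suppose $a\in A$ with $\hat a>0$ on $\K{S}\cap\M{\rho}$; I claim the image $\pi(a)$ satisfies $\widehat{\pi(a)}>0$ on $\K{\tilde S}$, because every character of $\tilde A$ is $\tilde\rho$-continuous (Corollary \ref{automatic continuity}) and restricts to an element of $\K{S}\cap\M{\rho}$ on $A$, and conversely. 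Applying Jacobi's Theorem (Theorem \ref{Jacobi}) in $\tilde A$ gives $\pi(a)\in\tilde S\subseteq\cl{\tilde\rho}{\bar S}$, and pulling back through $\pi$ yields $a\in\cl{\rho}{S}$. The strict-positivity-to-membership gap is closed by a standard $\epsilon$-argument: for general $a\in\pos{\K{S}\cap\M{\rho}}$ and any $\epsilon>0$, the element $a+\epsilon$ is strictly positive on the compact set $\K{S}\cap\M{\rho}$ (compact by Corollary \ref{compact}), hence $a+\epsilon\in\cl{\rho}{S}$, and letting $\epsilon\to 0$ gives $a\in\cl{\rho}{S}$ since $\cl{\rho}{S}$ is $\rho$-closed.

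The main obstacle I anticipate is bookkeeping around the quotient-and-completion: one must check that the natural map $\M{\rho}\to\V{\tilde A}$ (restriction of characters of $\tilde A$ back to $A$, and extension of $\rho$-continuous characters of $A$ to $\tilde A$) is a bijection compatible with the relevant nonnegativity sets, i.e. that $\K{S}\cap\M{\rho}$ corresponds exactly to $\K{\tilde S}$ under this identification. The subtlety is that $S$ itself need not be closed nor archimedean in $A$, so one genuinely needs to replace it by its closure in $\tilde A$ and verify that this closure is still a $\sum\tilde A^{2d}$-module and is archimedean; the archimedean property is where Lemma \ref{SemiNormSo2d} is essential, since it is what guarantees $r\pm b\in\tilde A^{2d}$. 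A secondary point requiring care is that $I_\rho\subseteq\cl{\rho}{S}$ automatically (as $0\in\cl{\rho}{S}$ and $I_\rho$ consists of elements of $\rho$-norm zero, hence $\rho$-limits of $0$), so quotienting by $I_\rho$ loses no information on the closure side.
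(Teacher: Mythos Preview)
Your proposal is correct and follows essentially the same route as the paper: pass to the Banach completion $(\tilde A,\tilde\rho)$, use Lemma~\ref{SemiNormSo2d} to see that the closure $\tilde S$ of the image of $S$ is an archimedean $\sum\tilde A^{2d}$-module, identify characters via Corollary~\ref{automatic continuity}, apply Jacobi, and close the gap with an $\epsilon$-perturbation. Two minor remarks: (i) you only need one direction of the character correspondence, namely that every $\alpha\in\K{\tilde S}$ restricts to an element of $\K{S}\cap\M{\rho}$ --- the paper notes the full bijection only in a footnote; (ii) the paper folds your two-step $\epsilon$-argument into one by applying Jacobi directly to $\tilde b+\tfrac{1}{n}$ rather than first treating the strictly positive case.
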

\begin{proof}
Since each $\alpha\in\K{S}\cap \M{\rho}$ is continuous and $$\pos{\K{S}\cap \M{\rho}}=\bigcap_{\alpha\in\K{S}\cap \M{\rho}}\alpha^{-1}([0,\infty)),$$ $\pos{\K{S}\cap \M{\rho}}$ is
$\rho$-closed. Since $S\subseteq\pos{\K{S}\cap \M{\rho}}$ this implies $\cl{\rho}{S}\subseteq\pos{\K{S}\cap \M{\rho}}$.
For the reverse inclusion we have to show that if $b\in\pos{\K{S}\cap \M{\rho}}$ then $b \in \cl{\rho}{S}$. Let $\tilde{S}$ denote the closure of the image of $S$ in $(\tilde{A}, \tilde{\rho})$. Then $\tilde{S}$ is a $\sum \tilde{A}^{2d}$-module of $\tilde{A}$. By Lemma \ref{SemiNormSo2d}, $\frac{1}{n}+\tilde{\rho}(a)+ a \in \tilde{A}^{2d} \subseteq \tilde{S}$ for all $a\in \tilde{A}$ and all $n\ge 1$, so $\tilde{\rho}(a) + a \in \tilde{S}$ for all $a\in \tilde{A}$. This implies that $\tilde{S}$ is archimedean.
By Corollary \ref{automatic continuity} every element of $\K{\tilde{S}}$ restricts to an element of $\K{S}\cap \M{\rho}$\footnote{In fact one can show that the restriction map  $\K{\tilde{S}} \rightarrow \K{S}\cap \M{\rho}$ is a homeomorphism.} so, by our hypothesis on $b$, $\alpha(\tilde{b}) =\alpha|_A(b) \ge 0$ for all $\alpha \in \K{\tilde{S}}$, where $\tilde{b}$ denotes the image of $b$ in $\tilde{A}$. Then $\alpha(\tilde{b}+\frac{1}{n}) >0$ for all $\alpha \in \K{\tilde{S}}$ so, by Jacobi's Theorem \ref{Jacobi}, $\tilde{b}+\frac{1}{n} \in \tilde{S}$ for all $n\ge 1$. Then $\tilde{b} \in \tilde{S}$, so $b\in \cl{\rho}{S}$.
\end{proof}

\begin{crl}\label{GenMoment}
Let $\rho$ be a submultiplicative seminorm on $A$,  $S$ a $\sum A^{2d}$-module of $A$.
If $\map{L}{A}{\reals}$ is a $\rho$-continuous linear functional such that $L(s)\ge 0$ for all $s\in S$ then there exists a unique Radon measure $\mu$ on $\V{A}$ such that
\[
	\forall a\in A \quad L(a)=\int \hat{a}~d\mu.
\]
Moreover, $\supp(\mu) \subseteq \K{S}\cap \M{\rho}$.
\end{crl}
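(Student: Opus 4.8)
The plan is to deduce Corollary \ref{GenMoment} from Theorem \ref{MainThm} together with the generalized Haviland Theorem \ref{Haviland}. First I would set $X = \M{\rho}$ and observe that, by Corollary \ref{compact}, $X$ is compact Hausdorff; hence taking $p = 1$ (so $\hat{p}\equiv 1\ge 0$ and $\hat{p}^{-1}([0,i]) = X$ is compact for every $i$) the hypotheses of Theorem \ref{Haviland} are satisfied for the homomorphism $\map{\hat{\ }}{A}{\Cnt{}{X}}$, $a\mapsto \hat{a}|_{X}$. To apply that theorem I must check that $L$ is nonnegative on $\{a\in A : \hat{a}\ge 0 \text{ on } X\} = \pos{\M{\rho}}$. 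By Theorem \ref{MainThm}, $\pos{\M{\rho}} = \cl{\rho}{\ringsop{A}{2d}} \subseteq \cl{\rho}{S}$ (since $\ringsop{A}{2d}\subseteq S$), and $L$ is nonnegative on $\cl{\rho}{S}$ because $L$ is $\rho$-continuous and nonnegative on $S$. Thus Theorem \ref{Haviland} yields a Radon measure $\mu$ on $\M{\rho}$ with $L(a) = \int_{\M{\rho}} \hat{a}\, d\mu$ for all $a\in A$; extending $\mu$ by zero gives a Radon measure on $\V{A}$ with the same property.

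Next I would sharpen the support statement. Since $\mu$ lives on $\M{\rho}$ we already have $\supp(\mu)\subseteq\M{\rho}$, so it remains to show $\supp(\mu)\subseteq\K{S}$. Fix $s\in S$; then $\hat{s}\ge 0$ on $\M{\rho}$ (every $\alpha\in\M{\rho}$ satisfies $\alpha(s)\ge 0$ because $s\in S$ and, applying the argument of Theorem \ref{MainThm}, $S\subseteq\pos{\K{S}\cap\M{\rho}}$; more directly, any $\alpha\in\M{\rho}$ extends to $\K{\tilde S}$ only if $\alpha(S)\subseteq[0,\infty)$ — but in fact we want all of $\M{\rho}$). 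The cleaner route: for $s\in S$ and $a\in A$ arbitrary, $a^{2d}s\in S$, so $L(a^{2d}s)\ge 0$, i.e. $\int \widehat{a^{2d}}\,\hat{s}\,d\mu\ge 0$ for all $a\in A$; together with the Stone--Weierstrass density of the algebra generated by the $\hat a$ in $\Cnt{}{\M{\rho}}$ and the fact that squares of such functions are dense enough to approximate arbitrary nonnegative continuous functions, one concludes $\int f\hat{s}\,d\mu\ge 0$ for all $f\ge 0$ in $\Cnt{}{\M{\rho}}$, hence $\hat{s}\ge 0$ $\mu$-a.e. Taking a countable base this forces $\supp(\mu)\subseteq\{\alpha : \hat{s}(\alpha)\ge 0\}$ for each $s$, and intersecting over $s\in S$ gives $\supp(\mu)\subseteq\K{S}$. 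Combined with $\supp(\mu)\subseteq\M{\rho}$ this is the asserted inclusion $\supp(\mu)\subseteq\K{S}\cap\M{\rho}$.

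Finally I would establish uniqueness of $\mu$. Suppose $\mu_1,\mu_2$ are two Radon measures on $\V{A}$ representing $L$. Both are necessarily supported on the compact set $\M{\rho}$ (if $\mu_i$ charged a point outside $\M{\rho}$, i.e. a non-$\rho$-continuous homomorphism, one could produce $a$ with $\hat a$ unbounded where it matters and contradict $\rho$-continuity of $L$ — more precisely, Lemma \ref{bddmrph} shows $\V{A}\setminus\M{\rho}$ consists of $\alpha$ with $|\alpha(x)|>\rho(x)$ for some $x$, and iterating $x^n$ shows such $\alpha$ cannot lie in the support of a representing measure of a $\rho$-continuous $L$). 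On the compact Hausdorff space $\M{\rho}$, the functions $\hat a$ for $a\in A$ separate points (by definition of the topology on $\V{A}$) and contain the constants, so by Stone--Weierstrass they are uniformly dense in $\Cnt{}{\M{\rho}}$. Since $\int \hat a\,d\mu_1 = \int\hat a\,d\mu_2$ for all $a$, the two measures agree on a dense subalgebra of $\Cnt{}{\M{\rho}}$, hence on all of $\Cnt{}{\M{\rho}}$, and by the Riesz representation theorem $\mu_1=\mu_2$.

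The step I expect to require the most care is the support claim $\supp(\mu)\subseteq\K{S}$: one has only the integral inequalities $L(a^{2d}s)\ge 0$ to work with, and must pass from nonnegativity of $\int \widehat{a^{2d}}\,\hat s\, d\mu$ over all $a\in A$ to pointwise-a.e.\ nonnegativity of $\hat s$. The honest way is to note that $\{\widehat{a^{2d}} : a\in A\}$ need not be all nonnegative continuous functions, but its $\Cnt{}{\M{\rho}}$-closure contains all functions of the form $g^{2d}$ with $g$ in the closed subalgebra generated by $\{\hat a\}$, which by Stone--Weierstrass is all of $\Cnt{}{\M{\rho}}$; hence it contains $g^{2d}$ for every $g\in\Cnt{}{\M{\rho}}$, and every nonnegative $f\in\Cnt{}{\M{\rho}}$ is a uniform limit of such (take $g = f^{1/(2d)}$). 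Dominated convergence then upgrades $\int \widehat{a^{2d}}\,\hat s\,d\mu\ge 0$ to $\int f\hat s\,d\mu\ge 0$ for all $f\ge 0$, which is exactly the statement that $\hat s\,d\mu$ is a nonnegative measure, i.e.\ $\hat s\ge 0$ $\mu$-a.e.; since $\M{\rho}$ is compact metrizable-enough (second countable as a subspace of a product indexed by $A$ — one may need to pass to the relevant countable subset of coordinates), $\supp(\mu)$ is contained in the closed set $\{\hat s\ge 0\}$.
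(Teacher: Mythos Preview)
Your argument is correct, but you take a longer route than the paper for the support claim. The paper applies Theorem \ref{Haviland} directly with $X = \K{S}\cap\M{\rho}$ rather than $X = \M{\rho}$: since Theorem \ref{MainThm} gives $\cl{\rho}{S} = \pos{\K{S}\cap\M{\rho}}$ (not merely $\pos{\M{\rho}} \subseteq \cl{\rho}{S}$), the $\rho$-continuous $L$ is already nonnegative on $\pos{\K{S}\cap\M{\rho}}$, and Haviland yields a measure living on $\K{S}\cap\M{\rho}$ from the outset. This makes the inclusion $\supp(\mu)\subseteq\K{S}\cap\M{\rho}$ automatic and avoids your entire Stone--Weierstrass argument with $L(a^{2d}s)\ge 0$. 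Your detour is valid, but note that your worry about second countability is unnecessary: once you know $\mu(\{\hat{s}<0\})=0$ and $\{\hat{s}<0\}$ is open, it is immediate from the definition of support that $\supp(\mu)\subseteq\{\hat{s}\ge 0\}$, no metrizability needed. For uniqueness the paper isolates a lemma to the effect that any Radon measure on $\V{A}$ with compact support is determinate, proved (as you do) via Stone--Weierstrass and Riesz; your sketch that any representing measure of a $\rho$-continuous $L$ must be supported in $\M{\rho}$ is not in the paper (the paper only needs that \emph{one} of the two measures has compact support), but your idea can be made rigorous by taking $b=y^2$ with $\rho(y)<1<|\alpha_0(y)|$ and observing $\int\hat{b}^n\,d\nu = L(b^n)\to 0$ forces $\nu(\{\hat{b}>1\})=0$.
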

\begin{proof}
By our hypothesis and Theorem \ref{MainThm} $L$ is non-negative on $\pos{\mathcal{K}_{S}\cap \M{\rho}}$. Applying Theorem \ref{Haviland}, with $X:=\K{S}\cap \M{\rho}$ and $\hat{} : A \rightarrow \Cnt{}{X}$ the map defined by $a \mapsto \hat{a}|_X$, yields a Radon measure $\mu'$ on $X$ such that $L(a) = \int_X \hat{a} d\mu'$ for all $a\in A$. Observe that $X$ is compact, by Corollary \ref{compact}, so we can take $p=1$. The Radon measure $\mu$ on $\V{A}$ that we are looking for is just the extension of $\mu'$ to $\V{A}$, i.e., $\mu(E) := \mu'(E\cap X)$ for all Borel sets $E$ in $\V{A}$. Uniqueness of $\mu$ is a consequence of the following easy result.
\end{proof}

\begin{lemma} Suppose $\mu$ is a Radon measure on $\V{A}$ having compact support. Then $\mu$ is determinate, i.e., if $\nu$ is any Radon measure on $\V{A}$ satisfying $\int \hat{a} d\nu = \int \hat{a} d\mu$ for all $a\in A$ then $\nu = \mu$.
\end{lemma}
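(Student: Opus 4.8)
The plan is to reduce everything to the classical fact that a finite Radon measure on a compact Hausdorff space is determined by its integrals against continuous functions; the only thing that needs work is showing that $\nu$, like $\mu$, is supported on one fixed compact set. Write $K:=\supp(\mu)$ (compact, by hypothesis) and define $\rho_0:A\to[0,\infty)$ by $\rho_0(a):=\sup_{\alpha\in K}|\hat{a}(\alpha)|$; this is finite since $\hat{a}$ is continuous and $K$ compact, and it is routine to check that $\rho_0$ is a submultiplicative seminorm with $\rho_0(1)=1$. By Lemma \ref{bddmrph} we have $K\subseteq\M{\rho_0}$, and by Corollary \ref{compact} the set $\M{\rho_0}$ is compact, so $\mu$ is in fact supported on the compact set $\M{\rho_0}$. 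I will also use repeatedly that $\mu(K)<\infty$ and that $\nu$ is finite on compact sets (both measures being locally finite), and that for each $a\in A$ the hypothesis $\int\hat{a}\,d\nu=\int\hat{a}\,d\mu$, applied to $a^{2n}\in A$ and using that the hat map is a ring homomorphism, gives $\int\hat{a}^{2n}\,d\nu=\int\hat{a}^{2n}\,d\mu$, both sides finite.

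The heart of the argument is the claim that $\supp(\nu)\subseteq\M{\rho_0}$, which I would prove by contradiction. Suppose $\beta\in\supp(\nu)$ with $\beta\notin\M{\rho_0}$. Then Lemma \ref{bddmrph} produces $a\in A$ with $|\hat{a}(\beta)|>\rho_0(a)$, and after rescaling $a$ by a suitable positive scalar we may assume $\rho_0(a)<1<|\hat{a}(\beta)|$. Testing against the even powers $\hat{a}^{2n}$ then yields contradictory asymptotics. On one hand $|\hat{a}|\le\rho_0(a)$ on $K$, so $\int\hat{a}^{2n}\,d\mu=\int_K\hat{a}^{2n}\,d\mu\le\rho_0(a)^{2n}\,\mu(K)\to0$. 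On the other hand, by continuity of $\hat{a}$ there are an open neighbourhood $V\ni\beta$ and a real $\lambda>1$ with $|\hat{a}|\ge\lambda$ on $V$, and shrinking $V$ by intersecting with a finite-measure neighbourhood of $\beta$ we may assume $0<\nu(V)<\infty$; since $\hat{a}^{2n}\ge0$ everywhere, $\int\hat{a}^{2n}\,d\nu\ge\lambda^{2n}\nu(V)\to\infty$. As the two integrals are equal for every $n$, this is absurd, proving the claim. In particular $\nu$ vanishes on $\V{A}\setminus\M{\rho_0}$.

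To finish, set $Y:=\M{\rho_0}$, a compact Hausdorff space on which both $\mu$ and $\nu$ are finite Radon measures with $\int_Y\hat{a}\,d\mu=\int_Y\hat{a}\,d\nu$ for all $a\in A$. The family $\{\hat{a}|_Y:a\in A\}$ is a subalgebra of $\Cnt{}{Y}$ which contains the constants (since $\hat{1}=1$) and separates the points of $Y\subseteq\V{A}$ (distinct homomorphisms disagree on some element of $A$), hence is uniformly dense in $\Cnt{}{Y}$ by the Stone--Weierstrass theorem. Consequently $\int_Y g\,d\mu=\int_Y g\,d\nu$ for all $g\in\Cnt{}{Y}$, and the uniqueness part of the Riesz representation theorem gives $\mu|_Y=\nu|_Y$. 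Since $\mu$ and $\nu$ are both zero outside $Y$, this yields $\mu(E)=\mu(E\cap Y)=\nu(E\cap Y)=\nu(E)$ for every Borel set $E\subseteq\V{A}$, i.e.\ $\mu=\nu$.

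The only genuine obstacle is the middle step, namely excluding mass of $\nu$ outside $\supp(\mu)$; the device that overcomes it is the use of the even powers $\hat{a}^{2n}$, which stay uniformly below $1$ on $K$ yet grow geometrically on a neighbourhood of any putative extra point of $\supp(\nu)$. The remaining care is just measure-theoretic bookkeeping — finiteness of $\mu(K)$, local finiteness of $\nu$, and integrability of the test functions — needed to make those two limits rigorous.
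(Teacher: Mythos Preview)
Your argument is correct. Both proofs follow the same overall scheme---first trap $\supp(\nu)$ inside a fixed compact set, then invoke Stone--Weierstrass density and Riesz uniqueness---but the first step is handled quite differently. The paper works directly with $Y=\supp(\mu)$: if $\supp(\nu)\not\subseteq Y$, inner regularity gives a compact $Z\subseteq\V{A}\setminus Y$ with $\nu(Z)>0$, and a single application of Stone--Weierstrass on the compact set $Y\cup Z$ produces an element $a\in A$ with $\hat a$ close to $0$ on $Y$ and close to $1$ on $Z$, yielding an immediate numerical contradiction. You instead introduce the auxiliary seminorm $\rho_0(a)=\sup_{\alpha\in K}|\hat a(\alpha)|$, invoke the paper's own Lemma~\ref{bddmrph} and Corollary~\ref{compact} to get the compact set $\M{\rho_0}\supseteq K$, and use geometric growth of the even powers $\hat a^{2n}$ to rule out mass of $\nu$ outside $\M{\rho_0}$. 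Your route avoids calling on inner regularity and on Stone--Weierstrass for the separation step, and it pleasantly recycles the Gelfand-spectrum machinery already set up in the paper; the paper's route is shorter, uses no auxiliary seminorm, and gives the slightly sharper intermediate conclusion $\supp(\nu)\subseteq\supp(\mu)$ rather than $\supp(\nu)\subseteq\M{\rho_0}$. One cosmetic point: your claim $\rho_0(1)=1$ fails in the degenerate case $K=\emptyset$ (i.e.\ $\mu=0$), but that case is trivial since $\nu(\V{A})=\int \hat 1\,d\nu=\int \hat 1\,d\mu=0$.
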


\begin{proof} Set $Y = \supp(\mu)$. Suppose first that $\supp(\nu) \not\subseteq Y$. Then there exists a compact set $Z \subseteq \V{A} \backslash Y$ with $\nu(Z)>0$. Choose $\epsilon >0$ so that $\epsilon <\frac{\nu(Z)}{\mu(Y)+\nu(Z)}$.  Since $Y,Z$ are compact and disjoint, the Stone-Weierstrass Theorem implies  there exists $a\in A$ such that $|\hat{a}(\alpha)| \le \epsilon$ for all $\alpha \in Y$ and $|\hat{a}(\alpha)-1| \le \epsilon$ for all $\alpha \in Z$. Replacing $a$ by $a^2$ if necessary, we can suppose $\hat{a}\ge 0$ on $\V{A}$.  Then $\int \hat{a}d\mu \le \epsilon \mu(Y)$, but $\int \hat{a} d\nu \ge \int_Z \hat{a} d\nu \ge (1-\epsilon)\nu(Z)$, which is a contradiction. It follows that $\supp(\nu) \subseteq Y$, so $\mu,\nu$ both have support in the same compact set $Y$. Then, using the Stone-Weierstrass Theorem again, $\int \varphi d\mu = \int \varphi d\nu$ for all $\varphi \in \Cnt{}{Y}$ so $\mu = \nu$, by the Riesz Representation Theorem.
\end{proof}

\begin{rem} (i) The converse of Corollary \ref{GenMoment} holds trivially: If $L(a) = \int \hat{a} d \mu$ for all $a\in A$ for some Radon measure $\mu$ with $\supp(\mu) \subseteq \K{S}\cap \M{\rho}$ then $L(s)\ge 0$ for all $s\in S$ and $|L(a)| \le \rho(a)L(1)$ for all $a\in A$, so $L$ is $\rho$-continuous.

(ii) Theorem \ref{MainThm} and Corollary \ref{GenMoment} should be viewed as `two sides of the same coin'. We have shown how Corollary \ref{GenMoment} can be deduced from Theorem \ref{MainThm} using Theorem \ref{Haviland}. Conversely, one can deduce Theorem \ref{MainThm} from Corollary \ref{GenMoment} by an easy application of the Hahn-Banach Separation Theorem.
\end{rem}

\section{$*$-seminormed $*$-algebras}\label{seminormed $*$-algebras}

In this section we consider a  $*$-algebra $R$ equipped with a submultiplicative $*$-seminorm $\varphi$, i.e., $R$ is a (unital, commutative) $\cplx$-algebra equipped with an involution $* : R \rightarrow R$ satisfying $$(\lambda a)^* = \overline{\lambda}a^*, \ (a+b)^* = a^*+b^*, \ (ab)^* = a^*b^* \text{ and } a^{**} = a$$ for all $\lambda \in \cplx$ and all $a,b \in R$, and $\varphi : R \rightarrow [0,\infty)$ satisfies   $$\varphi(\lambda a) = |\lambda|\varphi(a), \ \varphi(a+b)\le \varphi(a)+\varphi(b), \ \varphi(ab)\le \varphi(a)\varphi(b) \text{ and } \varphi(a^*) = \varphi(a)$$ for all $\lambda \in \cplx$ and all $a,b\in R$.

We denote by $\V{R}$ set of all $*$-algebra homomorphisms $\alpha : R \rightarrow \cplx$ equipped with its natural topology as a subspace of the product space $\cplx^{R}$ and by $\M{\varphi}$ 
the subspace of $\V{R}$ consisting of all $\varphi$-continuous $*$-algebra homomorphisms $\alpha : R \rightarrow \cplx$. The \textit{symmetric part} of $R$ is $$H(R):= \{ a\in R : a^* = a\}.$$ Since $R= H(R)\oplus iH(R)$, one sees that $\V{R}$ and $\M{\varphi}$ are naturally identified via restriction with $\V{H(R)}$ and $\M{\varphi|_{H(R)}}$, respectively, and $\varphi$ continuous $*$-linear functionals $L : R \rightarrow \cplx$ are naturally identified via restriction with $\varphi|_{H(R)}$-continuous $\reals$-linear functionals $L : H(R) \rightarrow \reals$.

Applying Theorem \ref{MainThm} and Corollary \ref{GenMoment} to the symmetric part of $(R,\varphi)$ yields the following result.

\begin{crl} \label{*-algebra version} Let $R$ be a $*$-algebra equipped with a submultiplicative $*$-seminorm $\varphi$, $S$ a $\sum H(R)^{2d}$-module of $H(R)$. Then $\overline{S}^{\varphi} = \pos{\K{S}\cap \M{\varphi}}$. If $L : R\rightarrow \cplx$ is any $\varphi$-continuous $*$-linear functional such that $L(s)\ge 0$ for all $s\in S$ then there exists a unique Radon measure on $\V{R}$ such that $L(a) = \int \hat{a} d\mu$ for all $a\in R$. Moreover, $\supp(\mu) \subseteq \K{S}\cap \M{\varphi}$.
\end{crl}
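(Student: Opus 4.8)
The plan is to reduce everything to the real case treated in Section \ref{snAlgs} by passing to the symmetric part $H(R)$, and then transport the conclusions back to $R$ using the decomposition $R = H(R) \oplus iH(R)$.

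First I would check that $(H(R),\varphi|_{H(R)})$ is a seminormed $\reals$-algebra in the sense of Section \ref{snAlgs}: it is a unital commutative $\reals$-subalgebra of $R$ (it contains $1$ since $1^* = 1$, and is closed under multiplication because $R$ is commutative), and the restriction of $\varphi$ is obviously a submultiplicative seminorm on it. Since the involution is $\varphi$-isometric, hence $\varphi$-continuous, the $\reals$-linear map $a \mapsto a - a^*$ is $\varphi$-continuous, so $H(R)$, being its kernel, is $\varphi$-closed in $R$. Consequently, for the module $S \subseteq H(R)$ the closure $\overline{S}^{\varphi}$ computed in $R$ agrees with the closure computed in $H(R)$, and Theorem \ref{MainThm}, applied with $A = H(R)$ and $\rho = \varphi|_{H(R)}$, gives $\overline{S}^{\varphi} = \pos{\K{S}\cap\M{\varphi|_{H(R)}}}$, the right-hand side being formed inside $H(R)$. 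Under the restriction identifications $\V{R}\cong\V{H(R)}$ and $\M{\varphi}\cong\M{\varphi|_{H(R)}}$ recalled above, this is exactly the asserted equality $\overline{S}^{\varphi} = \pos{\K{S}\cap\M{\varphi}}$ (note that $\pos{\K{S}\cap\M{\varphi}}$ must be read inside $H(R)$, which is forced since $\overline{S}^{\varphi}\subseteq H(R)$).

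For the functional, given a $\varphi$-continuous $*$-linear $L : R\to\cplx$ with $L(S)\subseteq[0,\infty)$, the restriction $L|_{H(R)}$ is real-valued (because $L(a^*) = \overline{L(a)}$), $\varphi|_{H(R)}$-continuous, and non-negative on $S$. Corollary \ref{GenMoment} then yields a unique Radon measure $\mu$ on $\V{H(R)}\cong\V{R}$ with $L(a) = \int\hat{a}\,d\mu$ for all $a\in H(R)$ and $\supp(\mu)\subseteq\K{S}\cap\M{\varphi|_{H(R)}} = \K{S}\cap\M{\varphi}$. To extend the representation to all of $R$, write an arbitrary $a\in R$ as $a = a_1 + ia_2$ with $a_1 = \tfrac12(a+a^*)$, $a_2 = \tfrac1{2i}(a-a^*)$ in $H(R)$; since every $\alpha\in\V{R}$ is $\cplx$-linear, $\hat{a} = \hat{a_1} + i\hat{a_2}$, and by $\reals$-linearity of $L$ and of integration (applied to real and imaginary parts) $L(a) = L(a_1) + iL(a_2) = \int\hat{a_1}\,d\mu + i\int\hat{a_2}\,d\mu = \int\hat{a}\,d\mu$.

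Finally, uniqueness of $\mu$ among Radon measures on $\V{R}$ follows from the determinacy Lemma above: $\supp(\mu)$ is contained in $\K{S}\cap\M{\varphi}$, which is compact (a closed subset of the compact space $\M{\varphi|_{H(R)}}$ of Corollary \ref{compact}), so if $\nu$ is another Radon measure on $\V{R}\cong\V{H(R)}$ with $\int\hat{a}\,d\nu = L(a) = \int\hat{a}\,d\mu$ for all $a\in H(R)$, then $\nu = \mu$. The whole argument is essentially bookkeeping once the identifications of Section \ref{seminormed $*$-algebras} are in place; the only points demanding a little care are the observation that $H(R)$ is $\varphi$-closed, so that the closure statement genuinely transfers from $H(R)$ to $R$, and the $\cplx$-linear extension of the measure representation in the last paragraph.
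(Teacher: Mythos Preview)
Your proposal is correct and follows exactly the approach the paper indicates: reduce to the symmetric part $H(R)$ and apply Theorem \ref{MainThm} and Corollary \ref{GenMoment} there, using the identifications $\V{R}\cong\V{H(R)}$ and $\M{\varphi}\cong\M{\varphi|_{H(R)}}$. You have simply filled in the bookkeeping details (closedness of $H(R)$, the $\cplx$-linear extension of the integral representation, and the invocation of the determinacy lemma for uniqueness) that the paper leaves implicit.
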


Corollary \ref{*-algebra version} applies, in particular, to any  $*$-semigroup algebra $\cplx[W]$ equipped with a $*$-seminorm $\|\cdot\|_{\phi}$ arising from an absolute value $\phi$ on the $*$-semigroup $W$, i.e., $\| \sum \lambda_ww\|_{\phi} := \sum_w |\lambda_w|\phi(w)$. In this way Corollary \ref{*-algebra version} extends \cite[Theorem 4.2.5]{BCRBK} and \cite[Theorem 4.3 and Corollary 4.4]{GMW}.

\section{Locally Multiplicatively Convex Topologies}\label{lmcalgs}

Let $A$ be an $\reals$-algebra. A subset $U$ of $A$ is called a \textit{multiplicative set} (an \textit{m-set} for short) if $U\cdot U\subseteq U$. A locally convex vector space topology on $A$ is
said to be \textit{locally multiplicatively convex} (\textit{lmc} for short) if there exists a system of neighbourhoods for $0$ consisting of
\textit{m}-sets. It is immediate from the definition that multiplication is continuous in any lmc-topology. We recall the following result.

\begin{thm}\label{lmcSN}
 A locally convex vector space topology $\tau$ on $A$ is lmc if and only if $\tau$ is generated by a family of submultiplicative seminorms on $A$.
\end{thm}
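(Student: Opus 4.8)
The plan is to prove Theorem~\ref{lmcSN} by establishing both implications, the harder of which is that an lmc topology is generated by a family of \emph{submultiplicative} seminorms.

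For the easy direction, suppose $\tau$ is generated by a family $\{\rho_i\}_{i\in I}$ of submultiplicative seminorms. A basis of neighbourhoods of $0$ is given by finite intersections of the sets $\{a : \rho_{i_1}(a)<\epsilon,\dots,\rho_{i_k}(a)<\epsilon\}$. Replacing a finite family by its pointwise maximum $\rho = \max\{\rho_{i_1},\dots,\rho_{i_k}\}$ (which is again a submultiplicative seminorm, since $\max$ of submultiplicative seminorms is submultiplicative), each basic neighbourhood has the form $\{a : \rho(a)<\epsilon\}$. Rescaling, it suffices to check that $U = \{a : \rho(a)\le 1\}$ is an $m$-set, and indeed $\rho(a),\rho(b)\le 1$ forces $\rho(ab)\le\rho(a)\rho(b)\le 1$. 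So $\tau$ has a neighbourhood basis at $0$ of $m$-sets, hence is lmc.

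For the nontrivial direction, assume $\tau$ is lmc and fix a neighbourhood basis $\{U_j\}_{j\in J}$ of $0$ consisting of $m$-sets. Since $\tau$ is a locally convex vector space topology, I may shrink each $U_j$ and arrange (standard facts about locally convex spaces) that each $U_j$ is in addition convex, balanced (circled), and absorbing; the key point is that the operations producing a convex balanced absorbing hull interact well enough with the $m$-set condition, or more simply that the $m$-sets already present can be further intersected with convex balanced absorbing neighbourhoods. Concretely, given an $m$-set neighbourhood $U$, let $V$ be its closed convex balanced hull intersected with $U\cap U^{2}$-type iterates; the cleanest route is to take $W = \bigcap_{n\ge 1} (U + U + \cdots)$—but in fact the standard argument is: for an $m$-set neighbourhood $U$ that is also convex balanced absorbing, the Minkowski functional $\rho_U(a) = \inf\{t>0 : a\in tU\}$ is a seminorm, and submultiplicativity follows because $a\in sU$, $b\in tU$ with $U$ convex balanced and $U\cdot U\subseteq U$ gives $ab \in st\,U\cdot U \subseteq st\,U$, whence $\rho_U(ab)\le\rho_U(a)\rho_U(b)$. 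Taking $\{\rho_U : U \text{ a convex balanced absorbing } m\text{-set neighbourhood of }0\}$, these submultiplicative seminorms generate $\tau$ because the $U$'s form a neighbourhood basis and $\{a:\rho_U(a)<1\}\subseteq U\subseteq\{a:\rho_U(a)\le 1\}$.

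The main obstacle is the reduction step: upgrading an arbitrary $m$-set neighbourhood of $0$ to one that is simultaneously an $m$-set \emph{and} convex, balanced, and absorbing, since the convex balanced hull of an $m$-set need not be an $m$-set. The fix, which I would carry out carefully, is that in a locally convex space every neighbourhood of $0$ contains a convex balanced absorbing one, so given an $m$-set neighbourhood $U$, pick a convex balanced absorbing neighbourhood $V_0\subseteq U$, then pick an $m$-set neighbourhood $U_1\subseteq V_0$, and iterate; but a slicker resolution (and the one I expect the paper uses) is: for an $m$-set neighbourhood $U$, the set $U_\infty := \bigcap_{n=1}^\infty U^{(n)}$ where $U^{(1)}=U$ and $U^{(n+1)}$ is a convex balanced neighbourhood with $U^{(n+1)}+U^{(n+1)}\subseteq U^{(n)}$ and $U^{(n+1)}\subseteq U$, need not help directly either—so the correct and standard statement is simply that one may assume at the outset the neighbourhood basis of $m$-sets consists of convex, balanced, absorbing $m$-sets, this being a known structural fact about lmc algebras (Michael's theory of locally $m$-convex algebras); I would cite this or prove the small lemma that the convex hull of $U\cap U^2\cap U^3\cap\cdots$ remains an $m$-set when $U$ is idempotent-like, and then the Minkowski-functional computation above completes the proof.
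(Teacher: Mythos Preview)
The paper does not prove this theorem at all: its entire proof is the single line ``See \cite[4.3-2]{B-N-S}.'' So there is nothing to compare against, and your attempt to actually write out the argument goes well beyond what the paper does.

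That said, your write-up contains an unnecessary detour and a misplaced worry. You correctly identify the Minkowski-functional argument and correctly verify that if $U$ is convex, balanced, absorbing, and an $m$-set, then $\rho_U$ is a submultiplicative seminorm. But your ``main obstacle''---that the convex balanced hull of an $m$-set need not be an $m$-set---is not an obstacle at all. A two-line computation shows the opposite: if $U\cdot U\subseteq U$ and $a=\sum_i \lambda_i u_i$, $b=\sum_j \mu_j v_j$ lie in the convex hull of $U$ (so $u_i,v_j\in U$, $\lambda_i,\mu_j\ge 0$, $\sum_i\lambda_i=\sum_j\mu_j=1$), then $ab=\sum_{i,j}\lambda_i\mu_j\,u_iv_j$ with $u_iv_j\in U$ and $\sum_{i,j}\lambda_i\mu_j=1$, so $ab$ is again in the convex hull. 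Similarly, the balanced hull of an $m$-set is an $m$-set. Hence the balanced convex hull of an $m$-set neighbourhood of $0$ is a convex, balanced, absorbing $m$-set neighbourhood of $0$, and you get the desired basis immediately---no iteration, no $U_\infty$ constructions, no appeal to Michael's theory as a black box. With this fix, your sketch becomes a clean and complete proof; as written, the hedging and the multiple abandoned approaches in the last paragraph make it read as incomplete.
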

\begin{proof}
See \cite[4.3-2]{B-N-S}.
\end{proof}

A family $\mathcal{F}$ of submultiplicative seminorms of $A$ is said to be \textit{saturated} if, for any $\rho_1, \rho_2 \in \mathcal{F}$, the seminorm $\rho$ of $A$ defined by
\[
	\rho(x):=\max\{ \rho_1(x),\rho_2(x)\} \text{ for all } x\in A
\]
belongs to $\mathcal{F}$.
For an lmc topology $\tau$ on $A$ one can always assume that the family $\mathcal{F}$ of submultiplicative seminorms generating $\tau$ is saturated.
In this situation the topology $\tau$ is the inductive limit topology, i.e., the balls $B_r^{\rho}(0) := \{ a \in A : \rho(a)<r\},$ $\rho \in \mathcal{F}$, $r>0$  form a system of $\tau$-neighbourhoods of zero. This is clear.

We record the following more-or-less obvious result:

\begin{lemma}\label{lmc2} Suppose $\tau$ is an lmc topology on $A$ generated by a saturated family $\mathcal{F}$ of submultiplicative seminorms of $A$ and $L: A \rightarrow \reals$ is a $\tau$-continuous linear functional. Then there exists 
$\rho \in \mathcal{F}$ such that $L$ is $\rho$-continuous.
\end{lemma}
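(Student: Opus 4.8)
The plan is to exploit the standard fact that a continuous linear functional on a locally convex space is bounded on some neighbourhood of $0$, and then translate that bound into $\rho$-continuity for a single $\rho\in\mathcal{F}$, using that $\mathcal{F}$ is saturated.

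First I would use continuity of $L$ at $0$: since $L$ is $\tau$-continuous, the set $L^{-1}((-1,1))$ is a $\tau$-neighbourhood of $0$. As noted in the paragraph preceding the lemma, because $\mathcal{F}$ is saturated the balls $B_r^{\rho}(0) = \{a\in A : \rho(a)<r\}$, $\rho\in\mathcal{F}$, $r>0$, form a base of $\tau$-neighbourhoods of $0$. Hence there exist $\rho\in\mathcal{F}$ and $r>0$ with $B_r^{\rho}(0)\subseteq L^{-1}((-1,1))$, i.e. $\rho(a)<r \Rightarrow |L(a)|<1$.

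Next I would upgrade this to a global bound $|L(a)|\le \frac{1}{r}\,\rho(a)$ for all $a\in A$ by the usual homogeneity/scaling argument. If $\rho(a)>0$, then for any $\epsilon>0$ the element $b=\frac{r}{\rho(a)+\epsilon}\,a$ satisfies $\rho(b)=\frac{r\,\rho(a)}{\rho(a)+\epsilon}<r$, so $|L(b)|<1$, giving $|L(a)|<\frac{\rho(a)+\epsilon}{r}$; letting $\epsilon\to0$ yields $|L(a)|\le\frac{1}{r}\rho(a)$. If $\rho(a)=0$, then $\rho(na)=0<r$ for all $n$, so $|L(na)|<1$, i.e. $|L(a)|<\frac1n$ for all $n$, whence $L(a)=0$ and the bound holds trivially. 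This bound is exactly $\rho$-continuity of $L$ (continuity of a linear map between seminormed spaces is equivalent to such a bound), which is the desired conclusion.

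There is essentially no hard part here — the lemma is, as the authors say, ``more-or-less obvious''. The only point that requires the saturation hypothesis is the reduction to a \emph{single} seminorm: without saturation a basic $\tau$-neighbourhood of $0$ is only a finite intersection $\bigcap_{i=1}^k B_{r_i}^{\rho_i}(0)$, and one would then have to invoke $\rho:=\max_i\rho_i\in\mathcal{F}$ explicitly. Since $\mathcal{F}$ is assumed saturated, such a maximum is already in $\mathcal{F}$ and the finite intersection already contains some single ball $B_r^{\rho}(0)$, so this subtlety is absorbed into the cited remark about the inductive limit topology.
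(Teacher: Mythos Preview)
Your proof is correct and follows essentially the same approach as the paper: both start from $B_r^{\rho}(0)\subseteq L^{-1}((-1,1))$ obtained via the saturated neighbourhood base, and then use homogeneity/scaling to conclude $\rho$-continuity. The only cosmetic difference is that the paper phrases the scaling step as $L(B_{r\epsilon}^{\rho}(0))=\epsilon L(B_r^{\rho}(0))\subseteq(-\epsilon,\epsilon)$, whereas you extract the equivalent explicit bound $|L(a)|\le\frac{1}{r}\rho(a)$.
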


\begin{proof} The set $\{ a \in A : |L(a)|<1\}$ is an open neighbourhood of $0$ in $A$ so there exists $\rho \in \mathcal{F}$ and $r>0$ such that $B_r^{\rho}(0) \subseteq \{ a \in A : |L(a)|<1\}$. Then $B_{r\epsilon}^{\rho}(0) = \epsilon B_r^{\rho}(0)$ so $$L(B_{r\epsilon}^{\rho}(0)) = L(\epsilon B_r^{\rho}(0)) = \epsilon L(B_r^{\rho}(0)) \subseteq \epsilon (-1,1) = (-\epsilon,\epsilon)$$ for all $\epsilon>0$, i.e., $L$ is $\rho$-continuous.
\end{proof}

We denote the Gelfand spectrum of $(A,\tau)$, i.e., the  set of all $\tau$-continuous $\alpha \in \V{A}$, 
by $\M{\tau}$ for short.

\begin{crl} Suppose $\tau$ is an lmc topology on $A$ generated by a saturated family $\mathcal{F}$ of submultiplicative seminorms of $A$. Then $\M{\tau} = \bigcup\limits_{\rho \in \mathcal{F}} \M{\rho}$.
\end{crl}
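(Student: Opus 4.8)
The plan is to prove the two inclusions separately; both are essentially immediate from what has already been set up.

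For the inclusion $\bigcup_{\rho \in \mathcal{F}} \M{\rho} \subseteq \M{\tau}$, I would take an arbitrary $\rho \in \mathcal{F}$ and an arbitrary $\alpha \in \M{\rho}$, that is, a $\rho$-continuous $\reals$-algebra homomorphism $\alpha : A \to \reals$. Since $\rho$ belongs to the generating family $\mathcal{F}$, each ball $B_r^{\rho}(0) = \{a\in A : \rho(a)<r\}$ is a $\tau$-neighbourhood of $0$; hence $\rho$ is $\tau$-continuous at $0$, and by the seminorm inequality $\rho$ is $\tau$-continuous everywhere, so the topology induced by $\rho$ alone is coarser than $\tau$. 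Consequently $\rho$-continuity of $\alpha$ implies $\tau$-continuity of $\alpha$, i.e., $\alpha \in \M{\tau}$.

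For the reverse inclusion $\M{\tau} \subseteq \bigcup_{\rho \in \mathcal{F}} \M{\rho}$, I would take $\alpha \in \M{\tau}$. An $\reals$-algebra homomorphism $A \to \reals$ is in particular an $\reals$-linear functional, so $\alpha$ is a $\tau$-continuous linear functional on $A$ and Lemma \ref{lmc2} applies, producing $\rho \in \mathcal{F}$ such that $\alpha$ is $\rho$-continuous. Therefore $\alpha \in \M{\rho} \subseteq \bigcup_{\rho' \in \mathcal{F}} \M{\rho'}$, which finishes the proof.

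I do not expect any real obstacle here: the substance is entirely contained in Lemma \ref{lmc2}, whose proof in turn relies on the fact (noted just before that lemma) that, because $\mathcal{F}$ is saturated, a basic $\tau$-neighbourhood of $0$ can be taken to be a single $\rho$-ball. The only minor point worth a word is the observation that an $\reals$-algebra homomorphism into $\reals$ is automatically $\reals$-linear, so that Lemma \ref{lmc2} genuinely applies to the elements of $\M{\tau}$.
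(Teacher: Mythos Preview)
Your argument is correct and is exactly the approach the paper has in mind: the corollary is stated without proof immediately after Lemma~\ref{lmc2}, and the intended justification is precisely that one inclusion is trivial while the other is Lemma~\ref{lmc2} applied to $\alpha$ viewed as a linear functional.
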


Our main result in the previous section extends to general lmc topologies, as follows:

\begin{thm}\label{MainThm2}
Let $\tau$ be an lmc topology on $A$  and let $S$ be any $\sum A^{2d}$-module of $A$. Then $\cl{\tau}{S}=\pos{\mathcal{K}_{S}\cap \M{\tau}}$. In particular, $\cl{\tau}{\ringsop{A}{2d}}=\pos{\M{\tau}}$.
\end{thm}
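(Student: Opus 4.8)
The plan is to reduce Theorem \ref{MainThm2} to the seminormed case Theorem \ref{MainThm} via the presentation of $\tau$ by a saturated family $\mathcal{F}$ of submultiplicative seminorms. The inclusion $\cl{\tau}{S}\subseteq\pos{\K{S}\cap\M{\tau}}$ is the easy half: every $\alpha\in\K{S}\cap\M{\tau}$ is $\tau$-continuous and non-negative on $S$, so $\pos{\K{S}\cap\M{\tau}}=\bigcap_{\alpha\in\K{S}\cap\M{\tau}}\alpha^{-1}([0,\infty))$ is $\tau$-closed and contains $S$, hence contains $\cl{\tau}{S}$. The substance is the reverse inclusion.

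For the reverse inclusion, suppose $b\in\pos{\K{S}\cap\M{\tau}}$ and fix a basic $\tau$-neighbourhood of $b$, which by the remarks preceding Lemma \ref{lmc2} may be taken of the form $b+B^\rho_r(0)$ for some $\rho\in\mathcal{F}$ and $r>0$. I want to produce an element of $S$ in this neighbourhood; equivalently, I must show $b\in\cl{\rho}{S}$ for every $\rho\in\mathcal{F}$. Now $\rho$ is a submultiplicative seminorm on $A$ and $S$ is a $\sum A^{2d}$-module, so Theorem \ref{MainThm} gives $\cl{\rho}{S}=\pos{\K{S}\cap\M{\rho}}$. Since $\M{\rho}\subseteq\M{\tau}$ (any $\rho$-continuous homomorphism is $\tau$-continuous, as $\rho$ generates a coarser topology than $\tau$), we have $\K{S}\cap\M{\rho}\subseteq\K{S}\cap\M{\tau}$, and therefore $\pos{\K{S}\cap\M{\tau}}\subseteq\pos{\K{S}\cap\M{\rho}}=\cl{\rho}{S}$. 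Hence $b\in\cl{\rho}{S}$, so $b+B^\rho_r(0)$ meets $S$. As $\rho\in\mathcal{F}$ and $r>0$ were arbitrary, every $\tau$-neighbourhood of $b$ meets $S$, i.e.\ $b\in\cl{\tau}{S}$.

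The ``in particular'' clause follows by taking $S=\sum A^{2d}$, for which $\K{S}=\V{A}$ and hence $\K{S}\cap\M{\tau}=\M{\tau}$, giving $\cl{\tau}{\sum A^{2d}}=\pos{\M{\tau}}$.

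I do not anticipate a genuine obstacle here: the only points requiring a word of care are that $\tau$ admits a \emph{saturated} generating family so that the balls $B^\rho_r(0)$ really do form a neighbourhood base at $0$ (already noted in the text), and the two elementary containments $\M{\rho}\subseteq\M{\tau}$ and, dually, $\pos{\K{S}\cap\M{\tau}}\subseteq\pos{\K{S}\cap\M{\rho}}$. Everything else is a direct appeal to Theorem \ref{MainThm}. If anything is delicate it is purely bookkeeping about which topology is coarser than which; conceptually the lmc case carries no new difficulty beyond the seminormed case.
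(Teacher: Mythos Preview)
Your proof is correct and is essentially the same argument as the paper's: choose a saturated generating family $\mathcal{F}$, reduce to Theorem \ref{MainThm} for each $\rho\in\mathcal{F}$, and use $\M{\rho}\subseteq\M{\tau}$ (equivalently $\M{\tau}=\bigcup_{\rho}\M{\rho}$) to pass between the seminormed and lmc statements. The paper just packages this as the single chain $\cl{\tau}{S}=\bigcap_{\rho\in\mathcal{F}}\cl{\rho}{S}=\bigcap_{\rho\in\mathcal{F}}\pos{\K{S}\cap\M{\rho}}=\pos{\K{S}\cap\M{\tau}}$, whereas you unpack the two inclusions separately.
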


\begin{proof} Let $\mathcal{F}$ be a saturated family of submultiplicative seminorms generating $\tau$. Then $\cl{\tau}{S}= \bigcap\limits_{\rho \in \mathcal{F}}\cl{\rho}{S} = \bigcap\limits_{\rho \in \mathcal{F}} \pos{\mathcal{K}_{S}\cap \M{\rho}} = \pos{\mathcal{K}_{S}\cap \M{\tau}}$.
\end{proof}

In view of Lemma \ref{lmc2}, Corollary \ref{GenMoment} also extends to general lmc topologies in an obvious way. The unique Radon measure corresponding to a $\tau$-continuous linear functional $L : A \rightarrow \reals$ such that $L(s) \ge 0$ for all $s\in S$ has support contained in the compact set $\K{S}\cap \M{\rho}$ for some $\rho \in \mathcal{F}$.

The finest lmc topology on $A$ is the lmc topology generated by the family of all submultiplicative seminorms of $A$.
Theorem \ref{MainThm2} can thought of as a strengthening (in the commutative case) of the result of \cite[Lemma 6.1 and Proposition 6.2]{Schm2} about enveloping algebras for $\reals$-algebras. Note also the following:
\begin{crl}
Let $\eta$ be the finest lmc topology on $A$. Then, for any $\sum A^{2d}$-module $S$ of $A$,  $\cl{\eta}{S}=\pos{\K{S}}$. In particular, $\cl{\eta}{\sum A^{2d}}=\pos{\V{A}}$
\end{crl}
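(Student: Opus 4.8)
The plan is to read the result off from Theorem \ref{MainThm2} once one observes that, for the finest lmc topology $\eta$, \emph{every} $\reals$-algebra homomorphism $A \to \reals$ is $\eta$-continuous, i.e. $\M{\eta} = \V{A}$. Indeed, $\eta$ is by definition generated by the (automatically saturated) family $\mathcal{F}$ of all submultiplicative seminorms of $A$, so Theorem \ref{MainThm2} immediately gives $\cl{\eta}{S} = \pos{\K{S}\cap \M{\eta}}$ for every $\sum A^{2d}$-module $S$; everything then reduces to identifying $\M{\eta}$ with $\V{A}$.

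The key point is that each $\alpha \in \V{A}$ gives rise to a submultiplicative seminorm lying in $\mathcal{F}$. Concretely, set $\rho_\alpha(a) := |\alpha(a)|$. Absolute homogeneity and the triangle inequality for $\rho_\alpha$ follow from linearity of $\alpha$, and $\rho_\alpha(ab) = |\alpha(ab)| = |\alpha(a)|\,|\alpha(b)| = \rho_\alpha(a)\rho_\alpha(b)$ because $\alpha$ is multiplicative; hence $\rho_\alpha \in \mathcal{F}$. Since $|\alpha(a) - \alpha(b)| = \rho_\alpha(a-b)$, $\alpha$ is $\rho_\alpha$-continuous, so $\alpha \in \M{\rho_\alpha} \subseteq \bigcup_{\rho \in \mathcal{F}} \M{\rho} = \M{\eta}$ (using the corollary stated just before Theorem \ref{MainThm2}). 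Thus $\V{A} \subseteq \M{\eta}$, and the reverse inclusion is trivial, so $\M{\eta} = \V{A}$ and $\K{S} \cap \M{\eta} = \K{S}$. Plugging this into Theorem \ref{MainThm2} yields $\cl{\eta}{S} = \pos{\K{S}}$.

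Finally, for the ``in particular'' clause apply the above with $S = \sum A^{2d}$: for any $\alpha \in \V{A}$ and $a \in A$ we have $\alpha(a^{2d}) = \alpha(a)^{2d} \ge 0$, so $\K{\sum A^{2d}} = \V{A}$ and hence $\cl{\eta}{\sum A^{2d}} = \pos{\V{A}}$. I do not anticipate a genuine obstacle: all the substance is already contained in Theorem \ref{MainThm2}, and the only thing that requires care is checking that $|\alpha(\cdot)|$ is genuinely \emph{submultiplicative} — this uses that $\alpha$ is an algebra homomorphism, not merely linear, which is exactly why $\alpha$ lies in the Gelfand spectrum of the finest lmc topology.
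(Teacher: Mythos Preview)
Your proof is correct and follows exactly the paper's approach: apply Theorem \ref{MainThm2} with $\tau=\eta$ and use $\M{\eta}=\V{A}$. The paper simply asserts this last equality as a fact, whereas you supply the (correct, standard) justification via the seminorm $\rho_\alpha=|\alpha(\cdot)|$; note also that the ``in particular'' clause is already contained in the ``in particular'' clause of Theorem \ref{MainThm2}, so your separate verification that $\K{\sum A^{2d}}=\V{A}$, while correct, is not strictly needed.
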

\begin{proof}
Apply Theorem \ref{MainThm2} with $\tau = \eta$, using the fact that $\M{\eta} = \V{A}$.
\end{proof}

\end{document}